\numberwithin{equation}{section}
\def\cb{{\mathcal B}}
\def\ce{{\mathcal E}}
\def\ch{{\mathcal H}}
\def\ck{{\mathcal K}}
\def\cs{{\mathcal S}}
\def\ct{{\mathcal T}}
\def\ga{{\mathfrak A}} 
\def\gpb{{\mathfrak b}}
\def\gf{{\mathfrak F}}
\def\gz{{\mathfrak Z}}
\def\bc{{\mathbb C}}
\def\bm{{\mathbb M}}
\def\bn{{\mathbb N}}
\def\bp{{\mathbb P}}
\def\bz{{\mathbb Z}}
\def\a{\alpha}
\def\b{\beta}
\def\g{\gamma}  \def\G{\Gamma}
\def\eps{\varepsilon}
\def\l{\lambda}
\def\s{\sigma} 
\def\f{\varphi}  
\def\om{\omega} \def\Om{\Omega}
\newtheorem{thm}{Theorem}[section]
\newtheorem{lem}[thm]{Lemma}
\newtheorem{prop}[thm]{Proposition}
\theoremstyle{definition}
\newtheorem{defin}[thm]{Definition}
\def\aut{\mathop{\rm Aut}}
\def\tr{\mathop{\rm Tr}}
\def\idd{{1}\!\!{\rm I}}
\begin{document}

\title[boolean stochastic processes]
{a note on boolean stochastic processes}
\author{Francesco Fidaleo}
\address{Francesco Fidaleo\\
Dipartimento di Matematica \\
Universit\`{a} di Roma Tor Vergata\\
Via della Ricerca Scientifica 1, Roma 00133, Italy} \email{{\tt
fidaleo@mat.uniroma2.it}}
\date{\today}

\begin{abstract}
For the quantum stochastic processes generated by the Boolean 
Commutation Relations, we prove the following version of De Finetti Theorem: each of such Boolean process is exchangeable if and only if it is independent and identically distributed with respect to the tail algebra.\\
\vskip0.1cm\noindent
{\bf Mathematics Subject Classification}: 60G09, 46L53, 46L05, 46L30, 46N50.\\
{\bf Key words}:  Exchangeability; Non commutative probability and statistics;
$C^{*}$--algebras; States; Applications to Quantum Physics.
\end{abstract}

\maketitle

\section{introduction}
\label{sec1}

In classical probability theory De Finetti--Hewitt--Savange Theorem asserts that a stochastic process is exchangeable or symmetric if and only if it is a convex combination of independent and identically distributed stochastic processes, or equivalently it is independent and identically distributed with respect to the tail algebra, see e.g. \cite{DeF, HS, Ka}. Such a result is partially generalised to non commutative, or quantum, case for stochastic processes arising from infinite tensor product or from the Fermi (cf. Canonical Anti--commutation Relations) algebra, see e.g. \cite{ABCL, CrF, CrF1, St2}. Combining the results in \cite{CrF1, St2} with the local structure of the Canonical Commutation Relations algebra (cf. \cite{LRT}), it can be
also straightforwardly seen that De Finetti--Hewitt--Savange still holds for the case of quantum fields describing Bose particles.

The situation arising from the general quantum setting is rather complicated, and  in \cite{CrF1} the first systematic attempt to investigate the structure of non commutative exchangeable stochastic processes is done. The first, and maybe the main difficulty one meets in quantum case, is that a conditional expectation onto a subalgebra preserving a given state is not automatically guaranteed as in the classical case. The reader is referred to \cite{Sr} for details, and a wide literature (even if still partial) cited therein, about this point. Indeed, the quantum stochastic processes arising from Boolean Commutation Relations (cf. \cite{BGS}) provide a class of examples suitable for the investigations. The aim of the present note is in fact the following. On one hand, it is possible to see that
there are many examples of Boolean processes for which there exists no conditional expectation onto the tail algebra preserving the associated state, even if the latter is Abelian (but it is never a sub algebra of the centre of the Gelfand--Naimatk--Segal realisation of the process). On the other hand, it is possible to show by direct calculation, that a Boolean stochastic process is exchangeable if and only if it is independent and identically distributed with respect the tail algebra. This result, combined with Proposition 7.3 of \cite{CrF1} which asserts that any Boolean exchangeable process is convex combination of extremal (i.e. ergodic) ones, provides another remarkable example of quantum stochastic processes for which the complete form of De Finetti Theorem holds true. 

\section{preliminaries}
\label{sec2}

Consider the group $\bp_J:=\bigcup_{\{I\subset J\mid I\,\text{finite}\,\}}\bp_I$, made of all the permutations of the set $J$ moving only a finite numbers of indices, together with an action
$$
\g\in \bp_J\mapsto \a_g\in\aut(\ga)
$$
of $\bp_J$ by $*$--automorphisms of $\ga$. A state $\f\in\cs(\ga)$ is said to be {\it symmetric} if it is invariant w.r.t. the action $\a$ of
$\bp_J$. The set of the symmetric states is denoted by $\cs_{\bp_J}(\ga)$. When $\ga$ is unital, $\cs_{\bp_J}(\ga)$ is convex and compact in the weak--$*$ topology whose convex boundary is denoted by $\ce_{\bp_J}(\ga)$.

Following \cite{AFL, CrF1}, a {\it stochastic process} labelled by the index set $J$ is a quadruple
$\big(\ga,\ch,\{\iota_j\}_{j\in J},\Om\big)$, where $\ga$ is a $C^{*}$--algebra, $\ch$ is an Hilbert space,
the $\iota_j$'s are $*$--homomorphisms of $\ga$ in $\cb(\ch)$, and
$\Om\in\ch$ is a unit vector, cyclic for  the von Neumann algebra
$M:=\bigvee_{j\in J}\iota_j(\ga)$ naturally acting on $\ch$.
The process is said to be {\it exchangeable} if, for each $g\in\bp_J$, $n\in\bn$, $j_1,\ldots j_n\in J$, $A_1,\ldots A_n\in\ga$
$$
\langle\iota_{j_1}(A_1)\cdots\iota_{j_n}(A_n)\Om,\Om\rangle
=\langle\iota_{g(j_{1})}(A_1)\cdots\iota_{g(j_{n})}(A_n)\Om,\Om\rangle.
$$
The stochastic process as above is uniquely determined up to a natural unitary equivalence relation. To simplify, we limit the matter to the unital case, that is when $\ga$ has the unity $\idd$ and $\iota_j(\idd)=I$. 

Let a stochastic process $\big(\ga,\ch,\{\iota_j\}_{j\in J},\Om\big)$ be given, together with its corresponding state $\f$ on the free product $C^*$--algebra $\gf$ in unital or not unital case,
of $\ga$. Define the {\it tail algebra} of the process under consideration as
\begin{equation*}
\gz^\perp_\f:=\bigcap_{\begin{subarray}{l}I\subset J,\,
I \text{finite} \end{subarray}}\bigg(\bigcup_{\begin{subarray}{l}K\bigcap I=\emptyset,
\\\,\,\,K \text{finite} \end{subarray}}\bigg(\bigvee_{k\in K}\iota_k(\ga)\bigg)\bigg)''\,.
\end{equation*}
We provide the definition of conditionally independent and identically distributed process w.r.t. the tail algebra
$\gz^\perp_\f$ which is useful in quantum case.
\begin{defin}
\label{cocaind}
The stochastic process described by the state $\f\in\cs(\gf)$, is {\it conditionally independent and identically distributed}
w.r.t. the tail algebra if there exists a conditional expectation 
$$
E_\f:\bigvee_{j\in J}\iota_j(\ga)\to \gz^\perp_\f 
$$ 
preserving the vector state $\langle\,{\bf\cdot}\,\Om_\f,\Om_\f\rangle$ such that,
\begin{itemize}
\item[(i)] $\langle XY\Om_\f,\Om_\f\rangle=\langle E_\f(X)E_\f(Y)\Om_\f,\Om_\f\rangle$,
for each finite subsets $I,K\subset J$, $I\cap K=\emptyset$, and
$$
X\in\bigg(\bigvee_{i\in I}\iota_i(\ga)\bigg)\bigvee\gz^\perp_\f\,,
Y\in\bigg(\bigvee_{k\in K}\iota_k(\ga)\bigg)\bigvee\gz^\perp_\f\,;
$$
\item[(ii)]$E_\f(\iota_i(A))=E_\f(\iota_k(A))$
for each $i,k\in J$ and $A\in\ga$.
\end{itemize}
\end{defin}
The following Lemma, useful for application (see e.g. the proof of Theorem 5.4 in \cite{CrF1}), provides an {\it a--priori} stronger condition for being independent w.r.t. the tail algebra.
\begin{lem} 
\label{ellek}
The condition (i) of Definition \ref{cocaind} is equivalent to
\begin{itemize}
\item[(i1)] $\langle X_1X_2\cdots X_n\Om_\f,\Om_\f\rangle=\langle E_\f(X_1)E_\f(X_2)\cdots E_\f(X_n)\Om_\f,\Om_\f\rangle$,
for each finite subsets $I_k,I_l\subset J$: $I_k\cap I_l=\emptyset$, $k,l=1,2,\dots,n$, $k\neq l$, with
$$
X_k\in\bigg(\bigvee_{i\in I_k}\iota_i(\ga)\bigg)\bigvee\gz^\perp_\f\,,\quad k=1,2,\dots,n\,.
$$
\end{itemize}
\end{lem}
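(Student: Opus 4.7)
The plan is to prove only the nontrivial direction (i)$\Rightarrow$(i1), since (i1)$\Rightarrow$(i) follows at once by specialising to $n=2$. I would proceed by induction on $n\geq 2$, with the base case $n=2$ being precisely hypothesis (i).

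For the inductive step I would group the $n-1$ leftmost factors together and invoke (i) for the disjoint pair of index sets $I_1\cup\cdots\cup I_{n-1}$ and $I_n$: the product $X_1\cdots X_{n-1}$ sits inside $\bigl(\bigvee_{i\in I_1\cup\cdots\cup I_{n-1}}\iota_i(\ga)\bigr)\vee\gz^\perp_\f$, while $X_n$ sits in $\bigl(\bigvee_{i\in I_n}\iota_i(\ga)\bigr)\vee\gz^\perp_\f$. Hypothesis (i) then gives
$$
\langle X_1\cdots X_n\Om_\f,\Om_\f\rangle=\langle E_\f(X_1\cdots X_{n-1})\,E_\f(X_n)\Om_\f,\Om_\f\rangle.
$$
Setting $W:=E_\f(X_n)\in\gz^\perp_\f$, I would next use the bimodule property $E_\f(A)W=E_\f(AW)$, valid since $W$ lies in the range of $E_\f$, together with the state preservation of $E_\f$, to rewrite the right-hand side as
$$
\langle X_1\cdots X_{n-2}(X_{n-1}W)\Om_\f,\Om_\f\rangle.
$$

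Because $\gz^\perp_\f\subset\bigl(\bigvee_{i\in I_{n-1}}\iota_i(\ga)\bigr)\vee\gz^\perp_\f$, the element $X_{n-1}W$ still belongs to that last algebra, so the $n-1$ operators $X_1,\ldots,X_{n-2},X_{n-1}W$ are of the form required by the inductive hypothesis. Applying that hypothesis and then invoking the bimodule property once more, in the form $E_\f(X_{n-1}W)=E_\f(X_{n-1})W=E_\f(X_{n-1})E_\f(X_n)$, closes the induction.

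I do not expect any genuine obstacle: the argument is essentially bookkeeping around the $\gz^\perp_\f$-bimodule structure of a state-preserving conditional expectation, which is automatic. The only point that has to be checked at each step is that the auxiliary tail-algebra factor can be absorbed into a suitable localised operator, which is possible precisely because $\gz^\perp_\f$ is contained in every algebra $\bigl(\bigvee_{i\in I_k}\iota_i(\ga)\bigr)\vee\gz^\perp_\f$ appearing in the statement.
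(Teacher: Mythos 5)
Your argument is correct and is essentially the paper's own proof: both rest on nothing more than repeated application of (i) together with the $\gz^\perp_\f$--bimodule property of $E_\f$ and the fact that $E_\f$ preserves the vector state. The only (cosmetic) difference is that you organise it as an induction peeling $X_n$ off on the right and absorbing $E_\f(X_n)$ into $X_{n-1}$, while the paper writes a single chain of equalities peeling factors off on the left.
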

\begin{proof}
The proof follows by the repeated application of (i), and the bimodule property of the conditional expectation. We get
\begin{align*}
&\langle X_1X_2X_3\cdots X_{n-2}X_{n-1}X_n\Om_\f,\Om_\f\rangle\\
=&\langle E_\f(X_1)E_\f(X_{2}X_3\cdots X_{n-2}X_{n-1}X_n)\Om_\f,\Om_\f\rangle\\
=&\langle E_\f(E_\f(X_1)X_{2}X_3\cdots X_{n-2}X_{n-1}X_n)\Om_\f,\Om_\f\rangle\\
=&\langle E_\f(E_\f(X_1)X_{2})E_\f(X_3\cdots X_{n-2}X_{n-1}X_n)\Om_\f,\Om_\f\rangle\\
=&\langle E_\f(X_1)E_\f(X_{2})E_\f(X_3\cdots X_{n-2}X_{n-1}X_n)\Om_\f,\Om_\f\rangle\\
&\cdots\qquad\qquad\qquad\cdots\qquad\qquad\qquad\cdots\\
=&\langle E_\f(X_1)\cdots E_\f(X_{n-2})E_\f(X_{n-1}X_n)\Om_\f,\Om_\f\rangle\\
=&\langle E_\f((E_\f(X_1)\cdots E_\f(X_{n-2})X_{n-1})X_n)\Om_\f,\Om_\f\rangle\\
=&\langle E_\f(E_\f(X_1)\cdots E_\f(X_{n-2})X_{n-1})E_\f(X_n)\Om_\f,\Om_\f\rangle\\
=&\langle E_\f(X_1)\cdots E_\f(X_{n-2})E_\f(X_{n-1})E_\f(X_n)\Om_\f,\Om_\f\rangle\,.
\end{align*}
\end{proof}

\section{boolean stochastic processes}
\label{sec5}
Let $\ch$ be a complex Hilbert space. Recall that the Boolean Fock space over $\ch$ (cf. \cite{BGS}) is given by $\G(\ch):=\mathbb{C}\oplus \ch$,
where the vacuum vector $\Om$ is $(1,0)$.
On $\Gamma(\ch)$ we define the creation and annihilation operators, respectively given for $f\in \ch$, by
$$
b^\dagger(f)(\alpha\oplus g):=0\oplus \alpha f,\,\,\,\, b(f)(\alpha\oplus g):=\langle g,f\rangle_\ch \oplus 0,\,\,\, \alpha\in\mathbb{C},\, g\in\ch.
$$
They are mutually adjoint, and satisfy the following relations for $f,g\in \ch$,
$$
b(f)b^\dagger(g)=\langle g, f\rangle_\ch \langle\,{\bf\cdot}\,,\Om\rangle\Om\,,\quad
b^\dagger(f)b(g)=\langle\,{\bf\cdot}\,,0\oplus g\rangle 0\oplus f\,.
$$
To simplify the matter, we consider the unital stochastic Boolean processes. We also reduce the investigation to the separable case, that is $J=\bn$.
As shown in Section 7 of \cite{CrF1}, the unital $C^*$--algebras acting on $\Gamma(\ell^2(\bn))$ 
generated by the annihilators $\{b(f)\mid f\in\ell^2(\bn)\}$, or equally well by the selfadjoint part of annihilators 
$\{b(f)+b^\dagger(f)\mid f\in\ell^2(\bn)\}$,
coincides with 
$$
\gpb=\ck(\ell^2(\{\#\}\cup\bn))+\bc I\,,
$$
$\ck(\ch)$ being the $C^*$--algebra of all the compact operators acting on the Hilbert space $\ch$. Under this isomorphism, annihilators and creators are expressed by the system of matrix--units as follows:
$$
\varepsilon_{\#j}=b_j\,,\,\, \varepsilon_{j\#}=b^\dagger_j\,,\,\,
\varepsilon_{\#\#}=b_ib^\dagger_i\,,\quad \varepsilon_{ij}=b^\dagger_ib_j\,,\quad i,j\in\bn\,.
$$
It is possible to show that the universal $C^*$--algebra generated by the Boolean Commutation Relations still coincides (i.e. is isomorphic) with $\gpb$, see Section 7 of \cite{CrF1}. Thus, we directly refer to $\gpb$ as the {\it Boolean algebra}.
Let 
$$
A=\begin{pmatrix} 
	 a &b \\
	 c & d\\
     \end{pmatrix}
     \oplus \b\in\bm_2(\bc)\bigoplus\bc=:\ga\,.
$$
For each $j\in\bn$, the embeddings $\{\iota_j\}_{j\in\bn}$ generating any Boolean process are then given by
$$
\iota_j(A):=a\eps_{\#\#}+b\eps_{\#j}+c\eps_{j\#}+d\eps_{jj}+\b P_{\bn\backslash\{j\}}\,,
$$
$\{\eps_{mn}\mid m,n\in \{\#\}\cup\bn\}$ being the canonical system of matrix--units for $\cb(\ell^2(\{\#\}\cup\bn))$, and 
$P_{\bn\backslash\{j\}}$ the orthogonal projection onto the subspace $\ell^2(\bn\backslash\{j\})$. The Boolean processes will be identified as states on the Boolean algebra $\gpb$. Amomg them, the exchangeable ones are precisely those which are invariant by the natural action of the permutation group $\bp_\bn$ on $\gpb$.

We report the result in \cite{CrF1} (cf. Proposition 7.3) which asserts that the compact convex set of the symmetric states on $\gpb$ is a Choquet Simplex made of a segment.
\begin{prop}
\label{oilf}
We have for the compact convex set of the symmetric states,
$$
\cs_{\bp_{\bz}}(\gpb)=\{\g\om_\#+(1-\g)\om_\infty\mid\g\in[0,1]\}\,,
$$
where $\om_\#=\langle\,{\bf\cdot}\,e_\#,e_\#\rangle$ is the Fock vacuum state, and
\begin{equation*}
\om_\infty(A+aI):=a\,,\quad A\in \ck(\ell^2(\{\#\}\cup\bz))\,, a\in\bc\,.
\end{equation*}
\end{prop}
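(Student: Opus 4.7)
The plan is to identify $\cs(\gpb)$ with a concrete parameter space and then cut down to the symmetric ones using the triviality of permutation orbits in $\bn$.

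First I would describe states on the Boolean algebra. Since $\ck\equiv\ck(\ell^2(\{\#\}\cup\bn))$ is a closed two--sided ideal in $\gpb$ with $\gpb/\ck\cong\bc$, any $\f\in\cs(\gpb)$ decomposes as
\begin{equation*}
\f(K+\a I)=\tr(\r K)+\a\,,\qquad K\in\ck,\,\a\in\bc\,,
\end{equation*}
for a unique trace--class operator $\r\geq 0$ on $\ell^2(\{\#\}\cup\bn)$ with $\tr(\r)\leq 1$. (Positivity on $\gpb$ is automatic: if $K+\a I\geq 0$ one checks $\a\geq 0$ and $K\geq-\a I$, whence $\tr(\r K)\geq -\a\tr(\r)\geq -\a$.) Conversely every such $\r$ defines a state. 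Hence $\cs(\gpb)$ is parametrised by the positive trace--class operators of trace $\leq 1$.

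Next I would translate the symmetry condition. The action of $\s\in\bp_\bn$ on $\gpb$ is implemented by the unitary $U_\s$ permuting the canonical basis of $\ell^2(\{\#\}\cup\bn)$ (fixing $e_\#$ and sending $e_j\mapsto e_{\s(j)}$): on matrix units one has $\a_\s(\eps_{mn})=\eps_{\s(m)\s(n)}$, with the convention $\s(\#)=\#$. A direct computation shows $\f\circ\a_\s=\f$ iff $U_\s\r U_\s^*=\r$ for every $\s\in\bp_\bn$.

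The crucial step, which I expect to be the only real content of the argument, is to show that such a $\r$ must be a nonnegative multiple of the rank--one projection $P_\#=|e_\#\rangle\langle e_\#|$. Writing $\r_{mn}:=\langle\r e_n,e_m\rangle$, invariance forces $\r_{mn}=\r_{\s(m)\s(n)}$ for each $\s\in\bp_\bn$. Looking at the three possible kinds of orbits:
\begin{itemize}
\item for $m,n\in\bn$ with $m\neq n$, the entries $\r_{mn}$ are all equal to a single constant, but trace--class (hence Hilbert--Schmidt) implies $\sum_{m\neq n}|\r_{mn}|^2<\infty$, so they vanish;
\item for $m=n\in\bn$, the diagonal entries are all equal to a single constant $b$, and $\sum_{j\in\bn}b<\infty$ forces $b=0$;
\item for $(m,n)=(\#,j)$ or $(j,\#)$, $j\in\bn$, the entries are constant in $j$ and $\sum_j|\r_{\#j}|^2<\infty$ forces them to vanish.
\end{itemize}
Only the entry $\r_{\#\#}=:\g\in[0,1]$ survives, so $\r=\g P_\#$.

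Finally I would identify the corresponding state: $\f(K+\a I)=\g\langle Ke_\#,e_\#\rangle+\a=\g\,\om_\#(K+\a I)+(1-\g)\,\om_\infty(K+\a I)$. This gives the claimed parametrisation $\cs_{\bp_\bn}(\gpb)=\{\g\om_\#+(1-\g)\om_\infty\mid\g\in[0,1]\}$; the convex set is a segment (affinely homeomorphic to $[0,1]$), hence trivially a Choquet simplex with extreme points $\om_\#$ and $\om_\infty$. The only subtle point, as noted, is the use of infiniteness of $\bn$ together with the trace--class constraint to annihilate the potentially nonzero orbit--constants; everything else is book--keeping.
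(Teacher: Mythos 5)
Your argument is correct, and it is worth noting that the paper itself does not prove Proposition \ref{oilf}: it is imported verbatim from Proposition 7.3 of \cite{CrF1}, so your write-up supplies a self-contained proof rather than a variant of one given here. Your route is the natural one and all steps check out: the identification $\cs(\gpb)\simeq\{\rho\ \text{trace class},\ \rho\geq0,\ \tr\rho\leq1\}$ via $\ck(\ell^2(\{\#\}\cup\bn))^*\cong$ trace class (with the positivity verification for $\tr(\rho\,\cdot)+\a$ done correctly, using that $\a\geq0$ from the essential spectrum of $K+\a I$); the observation that the permutation action is $\mathrm{Ad}(U_\s)$ with $U_\s e_\#=e_\#$, which indeed satisfies $\mathrm{Ad}(U_\s)\circ\iota_j=\iota_{\s(j)}$ for the embeddings defining the Boolean process; and the orbit analysis, where the $2$-transitivity of $\bp_\bn$ on $\bn$ plus summability of the diagonal and square-summability of rows kills every orbit constant except $\rho_{\#\#}$. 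Two small streamlinings: since $\rho\geq0$, once the diagonal entries $\rho_{jj}$ ($j\in\bn$) vanish, all entries $\rho_{mj}$, $\rho_{j\#}$ vanish by Cauchy--Schwarz, so your first and third bullets are redundant; and you should state explicitly (though it is immediate from $U_\s e_\#=e_\#$ and $\a_\s(I)=I$) that every $\g\om_\#+(1-\g)\om_\infty$ is symmetric, so that the displayed equality is an equality of sets and not just one inclusion. With those cosmetic remarks, the proof is complete and matches the decomposition \eqref{oif1} used throughout the paper, with the symmetric states being exactly the segment joining $\om_\#$ (i.e.\ $T=P_\#$, $\g=1$) and $\om_\infty$ (i.e.\ $\g=0$).
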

Consider any general state $\om\in\cs(\gpb)$, it can be uniquely written as 
\begin{equation}
\label{oif1}
\om=\g\psi_T+(1-\g)\om_\infty\,,\quad T\in\ct(\ell^2(\{\#\}\cup\bz))_{+,1}\,, \g\in[0,1]\}\,.
\end{equation}
Here, $\ct(\ell^2(\{\#\}\cup\bz))_{+,1}$ denotes the set of positive normalized trace--class operators acting on $\ell^2(\{\#\}\cup\bz)$, with $\psi_T(A)=\tr(TA)$ and ''$\tr$'' stands for the unnormalized trace.

Now we show that
there are plenty of Boolean processes for which the tail algebra is not expected, that is no conditional expectation onto such an algebra preserves the state corresponding to the process under consideration. Yet, a Boolean process is exchangeable if and only if it is independent and identically distributed w.r.t. the tail algebra. To see this, we show by explicit computation that the states which  are preserved by some conditional expectation cannot be identically distributed, except for the symmetric ones.
To this end, we fix $T\in\ct(\ell^2(\{\#\}\cup\bz))_{+,1}$ with
$$
T=\sum_{k\in{\bf n}}\l_k\langle \,{\bf\cdot}\,,\xi_k\rangle\xi_k\,,
$$ 
where ${\bf n}$ is a countable set of cardinality $\tr(s(T))$, and $s(T)$ is the support--projection of $T$ in $\cb(\ell^2(\{\#\}\cup\bz))$.
\begin{prop}
\label{tabal}
For each state $\om$ given in \eqref{oif1}, the following assertions hold true.

If $\g=0$ then $\pi_{\om}(\gpb)''=\bc=\gz^\perp_{\om}$. 

If $\g=1$ then
$$
\pi_{\om}(\gpb)''=\cb(\ell^2(\{\#\}\cup\bz))\bigotimes I_{\ell^2({\bf n})}\,,
$$ 
acting on $\cb(\ell^2(\{\#\}\cup\bz)\bigotimes \ell^2({\bf n}))$, and
$$
\gz^\perp_{\om}=\left(\bc P_\#\oplus\bc P_\#^\perp\right)\bigotimes I_{\ell^2({\bf n})}\,.
$$
If $0<\g<1$ then
$$
\pi_{\om}(\gpb)''=\left(\cb(\ell^2(\{\#\}\cup\bz))\bigotimes I_{\ell^2({\bf n})}\right)\bigoplus\bc\,,
$$
acting on $\cb(\ell^2(\{\#\}\cup\bz)\bigotimes \ell^2({\bf n}))\bigoplus\bc$, and
$$
\gz^\perp_{\om}=\left[\left(\bc P_\#\oplus\bc P_\#^\perp\right)\bigotimes I_{\ell^2({\bf n})}\right]\bigoplus\bc\,.
$$
\end{prop}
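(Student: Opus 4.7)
The plan is to treat the three cases through an explicit GNS analysis followed by a direct computation of the tail algebra. One would identify $\pi_\om$, describe the local von Neumann algebras $\cn_K:=\big(\bigvee_{k\in K}\iota_k(\ga)\big)''$, and then intersect them as the excluded set grows.

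For the GNS step, the identity $\om_\infty(A+aI)=a$ shows that $\om_\infty$ factors through the quotient $\gpb/\ck(\ell^2(\{\#\}\cup\bn))\cong\bc$, hence $\pi_{\om_\infty}$ is one-dimensional and the case $\g=0$ is immediate. The state $\psi_T$ is normal on $\cb(\ell^2(\{\#\}\cup\bn))$, so its standard GNS realisation is $\pi_{\psi_T}(X)=X\otimes I$ on $\ell^2(\{\#\}\cup\bn)\otimes\ell^2({\bf n})$ with cyclic vector $\sum_k\sqrt{\l_k}\,\xi_k\otimes e_k$, whence $\pi_{\psi_T}(\gpb)''=\cb(\ell^2(\{\#\}\cup\bn))\otimes I$. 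In the mixed case $0<\g<1$ the states $\psi_T$ and $\om_\infty$ are disjoint (the former is faithful on the compacts, the latter annihilates them), so $\pi_\om=\pi_{\psi_T}\oplus\pi_{\om_\infty}$ and the third identity for $\pi_\om(\gpb)''$ follows.

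For the local algebras one notes that $P_{\bn\setminus\{j\}}=I-\eps_{\#\#}-\eps_{jj}$, so $\iota_j(\ga)=\mathrm{span}\{I,\eps_{\#\#},\eps_{\#j},\eps_{j\#},\eps_{jj}\}$. Multiplying such generators reconstructs every matrix unit $\eps_{mn}$ with $m,n\in\{\#\}\cup K$, giving $\bigvee_{k\in K}\iota_k(\ga)=\cb(\ell^2(\{\#\}\cup K))\oplus\bc P_{\bn\setminus K}$ for any finite $K$. Fixing a finite $I\subset\bn$ and letting $K$ exhaust $\bn\setminus I$, the matrix units generate $\cb(\ell^2(\{\#\}\cup(\bn\setminus I)))$ in the weak closure, while $P_{\bn\setminus K}=P_\#^\perp-\sum_{k\in K}\eps_{kk}$ tends strongly to $P_I$. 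The double commutant of the union inside $\cb(\ell^2(\{\#\}\cup\bn))$ is therefore $\cn_I:=\cb(\ell^2(\{\#\}\cup(\bn\setminus I)))\oplus\bc P_I$.

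The decisive step is the intersection $\bigcap_{I\text{ finite}}\cn_I=\bc P_\#\oplus\bc P_\#^\perp$. For $X$ in this intersection, the block decomposition $\cn_{\{j\}}=\cb(\ell^2(\{\#\}\cup\bn\setminus\{j\}))\oplus\bc P_{\{j\}}$ forces the matrix coefficients $\langle Xe_j,e_\#\rangle$ and $\langle Xe_\#,e_j\rangle$ to vanish for every $j\in\bn$, so $X$ is block diagonal with respect to $\bc e_\#\oplus\ell^2(\bn)$. Using then $\cn_{\{j,k\}}$ with $j\ne k$, the $\bc P_{\{j,k\}}$-summand forces the restriction of $X$ to $\ell^2(\{j,k\})$ to be a scalar multiple of the identity; letting $j,k$ vary yields $X\vert_{\ell^2(\bn)}\in\bc P_\#^\perp$, while the $\{\#\}$-block is an unconstrained scalar. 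Pulling this intersection through the GNS picture of the first step (by tensoring with $I_{\ell^2({\bf n})}$ and, when $0<\g<1$, appending the $\bc$ summand coming from the disjoint $\pi_{\om_\infty}$ sector, which is contained in every local algebra because $\om_\infty$ is a character) produces the claimed formulas for $\gz^\perp_\om$. The main anticipated obstacle is precisely this intersection argument, where both blocks of $\cn_I$ must be exploited in tandem to simultaneously rule out $\{\#\}\leftrightarrow\bn$ cross terms and non-scalar diagonal pieces on $\ell^2(\bn)$.
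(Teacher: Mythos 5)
Your handling of the GNS representations and of the core case $\g=1$ is essentially the paper's own route, carried out in more detail: you identify $\bigvee_{k\in K}\iota_k(\ga)=\cb(\ell^2(\{\#\}\cup K))\oplus\bc P_{\bn\setminus K}$, pass to the weak closure $\cn_I=\cb(\ell^2(\{\#\}\cup(\bn\setminus I)))\oplus\bc P_I$, and compute $\bigcap_I\cn_I=\bc P_\#\oplus\bc P_\#^\perp$ before tensoring with $I_{\ell^2({\bf n})}$. The paper compresses all of this into a single display (with the roles of $J_n$ and $\bn\setminus J_n$ visibly interchanged there), so your two-step intersection argument, using first the singletons $\{j\}$ and then the pairs $\{j,k\}$, is a correct and welcome elaboration of what the paper merely asserts. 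The $\g=0$ case and the identification of $\pi_\om(\gpb)''$ in all three cases are also fine (though ``$\psi_T$ is faithful on the compacts'' is not accurate when $s(T)\neq I$; disjointness only needs that $\pi_{\psi_T}$ restricted to $\ck$ is nondegenerate while $\om_\infty$ annihilates $\ck$).

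The genuine gap is the last step for $0<\g<1$: the claim that the $\bc$ summand ``is contained in every local algebra because $\om_\infty$ is a character'' is not justified, and in fact fails. In the representation $\pi_{\psi_T}\oplus\pi_{\om_\infty}$ the algebra localized outside a nonempty finite $I$ is generated by the operators $\big((A+\lambda P_{\bn\setminus K})\otimes I_{\ell^2({\bf n})}\big)\oplus\lambda$ with $K\cap I=\emptyset$ and $A\in\cb(\ell^2(\{\#\}\cup K))$; every such generator acts on $\ell^2(I)\otimes\ell^2({\bf n})$ by the very scalar $\lambda=\om_\infty\big(A+\lambda P_{\bn\setminus K}\big)$ that it takes in the one--dimensional sector, and this tie survives $\sigma$--weak limits (test against the normal functional $T\mapsto\langle T(e_i\otimes v\oplus 0),e_i\otimes v\oplus 0\rangle-\langle T(0\oplus 1),0\oplus 1\rangle$ with $i\in I$, which vanishes on all generators but not on $0\oplus 1$). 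Hence the central projection $0\oplus 1$ onto the $\om_\infty$--sector belongs to $\pi_\om(\gpb)''$ but to none of the local algebras with $I\neq\emptyset$, so it cannot simply be ``appended'' to the intersection: carrying your own method honestly through the reducible representation yields elements of the form $(\mu P_\#\otimes I_{\ell^2({\bf n})})\oplus 0+\lambda\big((P_\#^\perp\otimes I_{\ell^2({\bf n})})\oplus 1\big)$, i.e.\ the $P_\#^\perp$--coefficient and the $\bc$--coefficient are identified, not three independent parameters. The paper's proof gives you no cover here either, since it dismisses this case with the single sentence that the matter reduces to $\g=1$; to close your argument you must compute the local von Neumann algebras in the direct--sum representation explicitly, and that computation does not support the appended--$\bc$ step as you stated it.
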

\begin{proof}
The form of the GNS representation $\pi_\om$ is immediately deduced from those of $\psi_T$ and $\om_\infty$. Concerning the tail algebra, the matter is reduced to the nontrivial case $\g=1$ for which $\om=\psi_T$. We get with $J_n:=\{1,\dots,n\}$, 
$P_\#:=\langle\,{\bf\cdot}\, e_\#,e_\#\rangle e_\#$, and considering $\cb(\ell^2(K))$ a non unital subalgebra of 
$\cb(\ell^2( \{\#\}\cup\bz))$ in a canonical way for any $K\subset \{\#\}\cup\bz$,
\begin{align*}
\gz^\perp_{\psi_T}=&\bigg(\bigcap_{n\in\bn}\cb(\ell^2(\{\#\}\cup J_n))\bigoplus\bc P_{\ell^2(\bn\backslash J_n)}\bigg)
\bigotimes I_{\ell^2({\bf n})}\\
=&\left(\bc P_\#\oplus\bc P_\#^\perp\right)\bigotimes I_{\ell^2({\bf n})}\,,
\end{align*}
\end{proof}
Concerning the conditional expectations onto $\gz^\perp_{\om}$ preserving the state $\om$, the case with $\g=0$ is trivial. The case $0<\g<1$ can be easily reduced to the case $\g=1$ as, if $\om=\g\psi_T+(1-g)\om_\infty$ is a nontrivial convex combination, for 
$X=A\oplus a\in\gz^\perp_{\om}$, $E(X)=F(A)\oplus a$ is a conditional expectation onto the tail algebra provided $F:\cb(\ell^2(\{\#\}\cup \bn))\rightarrow\gz^\perp_{\psi_T}$ is any conditional expectation onto $\gz^\perp_{\psi_T}$. Thus, the unique nontrivial case is to consider directly the case $\om=\psi_T$ onto $\ck(\ell^2(\{\#\}\cup\bn))$. By neglecting the unessential multiplicity, it can easily seen that
each conditional expectation $F$ onto $\gz^\perp_{\om_\xi}$ satisfies $F=F\circ E$, with
$$
E(A)=\om_\#(A)P_\#+P_\#^\perp AP_\#^\perp\,,\quad A\in \cb(\ell^2(\{\#\}\cup \bn))\,.
$$
Then we conclude that any conditional expectation $F=F_\f$ as above 
assumes the form
\begin{equation}
\label{mpast}
F_\f(A)=\om_\#(A)P_\#+\f(P_\#^\perp AP_\#^\perp)P_\#^\perp\,,\quad A\in \cb(\ell^2(\{\#\}\cup \bn))\,,
\end{equation} 
where $\f$ is any state, not necessarily normal, on $\cb(\ell^2(\bn))$, the last viewed again as a non unital subalgebra of 
$\cb(\ell^2(\{\#\}\cup\bn))$.
\begin{prop}
\label{1z1}
Fix a state $\psi_T\in\cs(\ck(\ell^2(\{\#\}\cup \bn)))$. There exists 
a conditional expectation $F_\f$ onto $\gz^\perp_{\psi_T}$ given in \eqref{mpast} 
preserving the state $\psi_T\in\cs(\ck(\ell^2(\{\#\}\cup \bn)))$ if and only if $e_\#$ is an eigenvector of $T$.
\end{prop}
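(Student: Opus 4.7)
The plan is to rewrite the invariance condition $\psi_T\circ F_\f=\psi_T$ as a testable linear equation in $A\in\ck(\ell^2(\{\#\}\cup\bn))$ by exploiting the fact that $F_\f$ is insensitive to the off--diagonal blocks of $A$ with respect to the decomposition $P_\#+P_\#^\perp=I$. First I would decompose any $A$ as
\[
A=P_\# AP_\#+P_\# AP_\#^\perp+P_\#^\perp AP_\#+P_\#^\perp AP_\#^\perp,
\]
note that $\om_\#(A)=\tr(P_\# AP_\#)$ depends only on the $(\#,\#)$--block, and that $\f(P_\#^\perp AP_\#^\perp)$ depends only on the $(\perp,\perp)$--block. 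Hence, using $\tr(T P_\#)=\langle T e_\#,e_\#\rangle$ and setting $T=\lambda P_\#\oplus T_1$ whenever $e_\#$ happens to be an eigenvector, the identity
\[
\tr(T F_\f(A))=\tr(TA)
\]
reads, after separating the four blocks, as two conditions: vanishing of the cross terms $\tr(T P_\# AP_\#^\perp)$ and $\tr(T P_\#^\perp AP_\#)$ for every $A$, together with a compatibility equation on the $(\perp,\perp)$--block.

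For the "only if" direction, the vanishing of $\tr(T P_\# AP_\#^\perp)=\tr(P_\#^\perp T P_\# A)$ for every $A$ forces $P_\#^\perp T P_\#=0$, which is precisely the statement that $T e_\#\in\bc e_\#$, i.e.\ $e_\#$ is an eigenvector of $T$ (the complementary cross term then vanishes automatically by selfadjointness of $T$).

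For the "if" direction, assume $T e_\#=\lambda e_\#$ with $\lambda\in[0,1]$, and write $T=\lambda P_\#+T_1$ with $T_1=P_\#^\perp T P_\#^\perp\ge 0$ and $\tr(T_1)=1-\lambda$. The cross terms now drop out of $\tr(TA)$, and the invariance equation reduces to
\[
\tr(T_1 P_\#^\perp AP_\#^\perp)=(1-\lambda)\,\f(P_\#^\perp AP_\#^\perp),\qquad A\in\cb(\ell^2(\{\#\}\cup\bn)).
\]
If $\lambda<1$, choose $\f:=\psi_{T_1/(1-\lambda)}$, which is a (normal) state on $\cb(\ell^2(\bn))$ and satisfies the identity by construction; if $\lambda=1$ then $T_1=0$ and any state $\f$ works. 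In both cases $F_\f$ given by \eqref{mpast} is a conditional expectation preserving $\psi_T$, which settles the converse.

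I do not expect a genuine obstacle: the argument is a block--matrix calculation, and the only point requiring attention is making sure that the bilateral vanishing $P_\#^\perp T P_\#=0=P_\# T P_\#^\perp$ is equivalent to $e_\#$ being an eigenvector of the positive (hence selfadjoint) trace--class operator $T$. This equivalence is immediate once one writes $T$ in block form with respect to $P_\#\oplus P_\#^\perp$.
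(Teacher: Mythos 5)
Your proposal is correct, and the ``if'' direction is essentially the paper's argument: write $T=\om_\#(T)P_\#+\tilde T$ with $\tilde T=P_\#^\perp TP_\#^\perp$ and take $\f$ proportional to $\psi_{\tilde T}$ restricted to $\cb(\ell^2(\bn))$ (any $\f$ when $\om_\#(T)=1$). The ``only if'' direction, however, follows a genuinely different and in fact more economical route. The paper works with the spectral resolution $T=\sum_k\l_k\langle\,{\bf\cdot}\,,\xi_k\rangle\xi_k$, selects an eigenvalue $\l_{j_0}$ maximal among those whose eigenvectors are not orthogonal to $e_\#$, and shows by an explicit computation that the rank--one operator $X=\langle\,{\bf\cdot}\,,\xi_{j_0}\rangle e_\#$ satisfies $\psi_T(F_\f(X))/\psi_T(X)<1$ for every $\f$, so no $F_\f$ can preserve $\psi_T$. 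You instead exploit directly the block structure of $F_\f$: since $F_\f$ annihilates the off--diagonal corners, preservation forces $\psi_T(A)=\tr(TA)=0$ for every $A=P_\#AP_\#^\perp$; testing on the rank--one (hence compact, so admissible) operators $A=\langle\,{\bf\cdot}\,,\eta\rangle e_\#$ with $\eta\perp e_\#$ gives $\langle Te_\#,\eta\rangle=0$ for all such $\eta$, i.e.\ $P_\#^\perp Te_\#=0$, which for the positive operator $T$ is exactly the eigenvector condition. Your version avoids the spectral decomposition, the selection of a maximal eigenvalue, and the strict--inequality bookkeeping that selection requires (the paper's step $\inf_{j\in J}\l_j<\max_{j\in J}\l_j$ needs care when $e_\#$ has a component in $\ker T$, a case your argument handles uniformly), whereas the paper's computation has the merit of exhibiting a concrete observable on which invariance visibly fails. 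Both are sound; yours is the cleaner derivation of necessity.
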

\begin{proof}
Suppose that $e_\#$ is an eigenvalue of $T$ with necessarily $\om_\#(T)$ as eigenvalue. If  $\om_\#(T)=1$, then $T=P_\#$ or equivalently $\psi_T=\om_\#$. Thus,  
any conditional expectation in \eqref{mpast} preserves $\psi_T$. If $e_\#$ is an eigenvalue for $T$ but $\om_\#(T)<1$, then 
$T=\om_\#(T)P_\#+\tilde T$,
where $\psi_{\tilde T}$ is a positive functional
with support $0<s(\tilde T)\leq P_\#^\perp$, and 
$\psi_{\tilde T}=\psi_{\tilde T}(P_\#^\perp\,{\bf\cdot}\,P_\#^\perp)$.
We compute with
$$
\f=\frac1{1-\om_\#(T)}\psi_{\tilde T}\lceil_{\cb(\ell^2(\bn))}\,,
$$
and for each $X\in\cb(\ell^2(\{\#\}\cup \bn))$, 
\begin{align*}
\psi_T(F_\f(X))=&\om_\#(X)\psi_T(P_\#)+\f(P_\#^\perp XP_\#^\perp)\psi_T(P_\#^\perp)\\
=&\om_\#(X)\om_\#(T)+\f(P_\#^\perp XP_\#^\perp)(1-\om_\#(T))\\
=&\om_\#(X)\om_\#(T)+\psi_{\tilde T}(X)=\psi_T(X)\,,
\end{align*}
that is, $\psi_T$ is expected. Suppose now that $e_\#$ is not an eigenvalue of $T$, and consider the subset $J\subset {\bf n}$ such that 
$\langle e_\#,\xi_j\rangle\neq0$, which is nonvoid. Then
$$
\inf_{j\in J}\l_j<\max_{j\in J}\l_j=\l_{j_0}
$$
for some $j_0\in J$, otherwise $e_\#$ would be an eigenvalue of $T$. Put 
$X:=\langle\,{\bf\cdot}\,,\xi_{j_0}\rangle e_\#$ where $\xi_{j_0}$ is a unit eigenvector with eigenvalue $\l_{j_0}$, and compute for each conditional expectation $F_\f$,
$$
\frac{\psi_T(F_\f(X))}{\psi_T(X)}=\sum_k\frac{\l_k}{\l_{j_0}}|\langle e_\#,\xi_k\rangle|^2
<\sum_k|\langle e_\#,\xi_k\rangle|^2=\om_\#(s(T))\leq1\,.
$$
Thus, $\psi_T$ cannot be expected. 
\end{proof}
Proposition \ref{1z1} provides examples of quantum stochastic processes for which, contrarily to the classical case,
the condition to be independent and identically distributed w.r.t. the tail algebra (cf. Definition \ref{cocaind}), cannot be formulated in the general case, without mentioning the {\it a--priori} existence of a preserving conditional expectation. 

We are now ready to prove the main result of the present note.
\begin{thm}
A Boolean process is exchangeable if and only if it is independent and identically distributed w.r.t the tail algebra.
\end{thm}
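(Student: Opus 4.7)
The strategy is to show that both conditions---exchangeability, and i.i.d.\ with respect to the tail algebra---single out the same one--parameter family $\om=\g\om_\#+(1-\g)\om_\infty$ of Proposition \ref{oilf}.

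For the direction ``exchangeable $\Rightarrow$ i.i.d.'', I would start from $\om=\g\om_\#+(1-\g)\om_\infty$. Since $T=P_\#$ has $e_\#$ as an eigenvector, Proposition \ref{1z1} yields a preserving conditional expectation $F_\varphi$ of the form \eqref{mpast}. I would choose $\varphi$ to be any state on $\cb(\ell^2(\bn))$ vanishing on the compacts (a Banach--limit state, so that $\varphi(\eps_{jj})=0$ for every $j$), and, for $0<\g<1$, lift $F_\varphi$ along the direct--sum decomposition of Proposition \ref{tabal} by setting $E(A\oplus x):=(F_\varphi(A),x)$. Condition (ii) of Definition \ref{cocaind} is then automatic, since $\varphi(\eps_{jj})=0$ forces $E(\pi_\om(\iota_j(A)))=(aP_\#+\b P_\#^\perp,\b)$ independently of $j$. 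For condition (i), Proposition \ref{tabal} allows one to write, for finite disjoint $I,K\subset\bn$, any $X\in\bigl(\bigvee_{i\in I}\iota_i(\gpa)\bigr)\vee\gz^\perp_\om$ in the form $X=(X_0+\a P_{\bn\setminus I},x)$ with $X_0\in\cb(\ell^2(\{\#\}\cup I))$, and similarly for $Y$; a direct computation exploiting $I\cap K=\emptyset$ then gives $\om_\#(X_1Y_1)=\om_\#(X_0)\om_\#(Y_0)$ together with $F_\varphi(X_1)=\om_\#(X_0)P_\#+\a P_\#^\perp$, whence the factorisation $\om(XY)=\om(E(X)E(Y))$ follows.

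For the converse direction ``i.i.d.\ $\Rightarrow$ exchangeable'', I would begin with $\om=\g\psi_T+(1-\g)\om_\infty$ as in \eqref{oif1} satisfying Definition \ref{cocaind}. Proposition \ref{1z1} forces $e_\#$ to be an eigenvector of $T$, so $T=\m P_\#+\widetilde T$ with $\m=\om_\#(T)$ and $\widetilde T=P_\#^\perp TP_\#^\perp\geq0$, and the preserving conditional expectation takes the form \eqref{mpast} with some state $\varphi$. Condition (ii) already forces $\varphi(\eps_{ii})$ to be independent of $i\in\bn$. I would then expand both sides of the two--point identity
\[\om(\iota_i(A)\iota_j(B))=\om\bigl(E(\iota_i(A))E(\iota_j(B))\bigr),\qquad i\neq j,\]
for arbitrary $A,B\in\bm_2(\bc)\oplus\bc$, and compare coefficients of the monomials in their matrix entries. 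The off--diagonal coefficient $cb'$ yields $\langle\widetilde T e_j,e_i\rangle=0$ for $i\neq j$; the diagonal coefficient $d\b'$ yields $\langle\widetilde T e_i,e_i\rangle$ constant in $i$, whence positivity of $\widetilde T$, finiteness of $\tr(\widetilde T)=1-\m$, and $|\bn|=\infty$ force $\langle\widetilde T e_i,e_i\rangle=0$ for all $i$. Hence $\widetilde T=0$, so $T=\m P_\#$, and normalisation $\tr(T)=1$ forces $T=P_\#$. Thus $\om=\g\om_\#+(1-\g)\om_\infty$, which is exchangeable by Proposition \ref{oilf}.

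The main obstacle is the coefficient bookkeeping in the converse direction: the five free parameters in each factor of $\bm_2(\bc)\oplus\bc$ produce many monomials, and one has to isolate correctly the two constraints that ultimately force $\widetilde T=0$. Once this is done, the rigidity of the Boolean commutation relations (via Proposition \ref{tabal}) turns the verification of condition (i) in the forward direction into the routine explicit computation outlined above.
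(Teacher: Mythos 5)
Your proposal is correct in substance, and the forward direction is essentially the paper's argument: your Banach--limit state vanishing on the compacts is exactly the paper's singular state $\s\circ\pi$ factored through the Calkin algebra, followed by the same explicit verification of condition (i) and the same lifting across the direct--sum picture of Proposition \ref{tabal} for $0<\g<1$. Where you genuinely diverge is the converse. The paper argues by contradiction at the level of the conditional expectation: writing $T=\om_\#(T)P_\#+\tilde T$, preservation forces the unique (normal) expectation $F(X)=\om_\#(X)P_\#+\frac{\psi_T(X)-\om_\#(T)\om_\#(X)}{1-\om_\#(T)}P_\#^\perp$, identical distribution makes $\psi_T(b_i^\dagger b_i)$ constant in $i$, dominated convergence makes it vanish, and normality of $F$ then yields the absurdity $I=F(I)=P_\#$. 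You instead work at the level of the density matrix: constancy of $\langle\tilde Te_i,e_i\rangle$ together with $\tr(\tilde T)=1-\om_\#(T)<\infty$ over infinitely many indices and positivity forces $\tilde T=0$, hence $T=P_\#$, and Proposition \ref{oilf} finishes. Your route buys you a slightly cleaner ending (no appeal to normality of $F$, and the off--diagonal constraint you extract is in fact not even needed), at the cost of the coefficient bookkeeping you acknowledge; note, though, one bookkeeping slip: the $d\b'$ coefficient in the two--point identity actually yields $\langle\tilde Te_i,e_i\rangle\langle\tilde Te_j,e_j\rangle=0$ for $i\neq j$, not constancy of the diagonal --- the constancy comes from condition (ii) (as you yourself observe, via the fact that preservation pins $\f$ down as $\frac{1}{1-\om_\#(T)}\psi_{\tilde T}$ on the compacts, exactly the computation in Proposition \ref{1z1}), so your chain of implications still closes, but the attribution of that constraint to the two--point expansion should be corrected.
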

\begin{proof}
For a state $\om=\g\psi_T+(1-\g)\om_\infty$, $\g\in[0,1]$, the case $\g=0$ is trivial and the case $0<\g<1$ easily follows from that when $\g=1$. So we reduce the matter to $\g=1$, the symmetric situation corresponding to $\om=\om_\#$. It is then enough to put
$\om=\psi_T$ and see first that in the symmetric case that is when $\psi_T=\om_\#$, it is independent and identically distributed w.r.t. the tail algebra. Then in non symmetric case, either $\psi_T$ is not expected hence the corresponding process cannot be identically distributed w.r.t. the tail algebra, or when it is expected we show that the corresponding process cannot be identically distributed.

Concerning the symmetric case $\om=\om_\#$, 
choose any state $\s\in\cs\big(\cb(\ell^2(\bn))/\ck(\ell^2(\bn))$, $\cb(\ell^2(\bn))/\ck(\ell^2(\bn))$ being the Calkin algebra with 
$\pi:\cb(\ell^2(\bn))\rightarrow\cb(\ell^2(\bn))/\ck(\ell^2(\bn))$ the canonical projection.
The singular state $\f:=\s\circ\pi\in\cs(\cb(\ell^2(\bn)))$ is invariant under the natural action of $\bp_\bn$ on $\ell^2(\bn)$. For each $\f$ as above, the conditional expectation $F_\f$ preserves $\om_\#$, and is invariant w.r.t. the action $\a$ of the permutations by construction. Thus, $\om_\#$ is identically distributed. Now for $I,K\subset\bn$ with $I\cap K=\emptyset$, consider $A\in\cb(\ell^2(\{\#\}\cup I))$, $B\in\cb(\ell^2(\{\#\}\cup K))$, and $a,b\in\bc$. 
We have for the the generic element $X\in\big(\bigvee_{i\in I}\iota_i(\ga)\big)\bigvee\gz^\perp_\f$, 
$Y\in\big(\bigvee_{i\in K}\iota_i(\ga)\big)\bigvee\gz^\perp_\f$,
\begin{align*}
X=&\om_\#(A)P_\#+P_\#AP_I+P_IAP_\#+P_IAP_I+aP_{\bn\backslash I}\,,\\
Y=&\om_\#(B)P_\#+P_\#BP_K+P_KBP_\#+P_KBP_K+bP_{\bn\backslash K}\\
F_\f(X)=&\om_\#(A)P_\#+\f(A+aP_{\bn\backslash I})P_\bn\,,\\
F_\f(Y)=&\om_\#(B)P_\#+\f(B+bP_{\bn\backslash I})P_\bn\\
XY=&\om_\#(A)\om_\#(B)P_\#+\om_\#(A)P_\#BP_K+bP_\#AP_I\\
+&\om_\#(B)P_IAP_\#+P_IAP_\#BP_K+bP_IAP_I\\
+&aP_KBP_\#+aP_KBP_K+abP_{\bn\backslash(I\cup K)}\,.
\end{align*}
We then get $\langle XYe_\#,e_\#\rangle=\langle F_\f(X)F_\f(Y)e_\#,e_\#\rangle$, that is $\om_\#$ is independent and identically distributed w.r.t. the tail algebra.

Suppose now that $\psi_T\in\cs(\ck(\ell^2(\{\#\}\cup\bz))$ is a generic state such that 
$\psi_T$ is not symmetric but expected. By Proposition \ref{1z1}, it happens if and only if $e_\#$ is an eigenvector of $T$ with eigenvalue
necessarily $\om_\#(T)<1$.
In this situation, the conditional expectation $F$ onto the tail algebra preserving $\psi_T$ is given by
$$
F(X)=\om_\#(X)P_\#+\frac{\psi_T(X)-\om_\#(T)\om_\#(X)}{1-\om_\#(T)}P_\#^\perp\,.
$$
By Lebesgue Dominated Convergence Theorem, we get
$$
\lim_i\psi_T(b^\dagger_ib_i)=\lim_i\sum_k\l_k|\xi_k(i)|^2=\sum_k\l_k\lim_i|\xi_k(i)|^2=0\,.
$$
Thus, if $\psi_T$
would be identically distributed, this would imply $\psi_T(b^\dagger_ib_i)=0$, $i\in\bn$. 
As $F (b^\dagger_ib_i)=\frac{\psi_T(b^\dagger_ib_i)}{1-\om_\#(T)}P_\#^\perp$, again we have $F(b^\dagger_ib_i)=0$, $i\in\bn$.
Being $F$ a normal map in this case, we get
$$
I=F(I)=F\bigg(P_\#+\sum_{i\in\bn}b^\dagger_ib_i\bigg)=P_\#+\sum_{i\in\bn}F(b^\dagger_ib_i)=P_\#\,,
$$
which is a contradiction.
\end{proof}

\end{document}